\documentclass[12pt]{amsart}
\usepackage[margin=1.25in]{geometry}
\usepackage{latexsym}
\usepackage{amsmath}
\usepackage{amssymb}
\usepackage{amsthm}
\usepackage{stmaryrd}
\usepackage{amscd}
\usepackage{enumerate} 
\usepackage{amssymb} 
\usepackage{mathrsfs}
\usepackage[all]{xy}
\usepackage{color}
\usepackage{graphicx}
\usepackage{mathtools}
\usepackage{comment}
\usepackage{multirow}
\usepackage{hyperref}
\hypersetup{colorlinks=true}
\usepackage{tikz}
\usepackage{tikz-cd}
\usetikzlibrary{decorations.markings}
\tikzset{degil/.style={
            decoration={markings,
            mark= at position 0.5 with {
                  \node[transform shape] (tempnode) {$\backslash$};
                  }
              },
              postaction={decorate}
}
}

\makeatletter
  
  \@addtoreset{equation}{thm}
  \makeatother

\title{A cusp surface singularity is Frobenius liftable}
\author{Tatsuro Kawakami}
\address{Graduate School of Mathematical Sciences, University of Tokyo, 3-8-1 Komaba,
Meguro-ku, Tokyo 153-8914, Japan} 
\email{kawakami@ms.u-tokyo.ac.jp}
\author{Teppei Takamatsu}
\address{Department of Mathematics, Faculty of Science,
Saitama University,
255 Shimo-Okubo, Sakura-ku,
Saitama-shi, Saitama 338-8570,
Japan}
\email{teppeitakamatsu.math@gmail.com}

\def\phi{\varphi}
\def\epsilon{\varepsilon}
\def\tilde{\widetilde}

\def\log{\operatorname{log}}

\def\Spec{\operatorname{Spec}}

\def\Supp{\operatorname{Supp}}

\def\Ker{\operatorname{Ker}}
\def\Coker{\operatorname{Coker}}

\def\p{{\mathfrak p}}

\newcommand{\Q}{\mathbb{Q}}

\newcommand{\PP}{\mathbb{P}}

\newcommand{\sO}{\mathcal{O}}

\theoremstyle{plain}
\newtheorem{thm}{Theorem}[section] 

\newtheorem{prop}[thm]{Proposition}

\newtheorem{lem}[thm]{Lemma}
\theoremstyle{definition} 
\newtheorem{defn}[thm]{Definition}
\newtheorem{conv}[thm]{Convention}

\theoremstyle{remark}

\newtheorem{defn and notation}[thm]{Definition and Notation}
 
\newtheorem*{cl}{Claim}

\theoremstyle{plain}
\newtheorem{theo}{Theorem}

\newtheorem{COR}[theo]{Corollary}

\keywords{Frobenius liftability; Cusp singularity}
\subjclass[2020]{13A35,14B05,14F10}
\begin{document}
\tolerance = 9999

\begin{abstract}
We show that a cusp surface singularity is Frobenius liftable.
As a consequence, we determine all the Frobenius liftable surface singularities.
\end{abstract}

\maketitle
\markboth{Tatsuro Kawakami and Teppei Takamatsu}{A cusp surface singularity is Frobenius liftable}

%\setcounter{tocdepth}{1}
%\tableofcontents

\section{Introduction}

Throughout this note, we work over an algebraically closed field $k$ of
characteristic $p>0$.  Frobenius liftability ($F$-liftability, for short) asks for a lifting of a scheme
to $W_2(k)$, together with a lifting of its Frobenius.
Global $F$-liftability has been studied extensively
\cite{BTLM,AWZ,AWZ2}, while Zdanowicz investigated local
$F$-liftability and its relationship with classical $F$-singularities
\cite{Zda18}.  
Since the $F$-liftability of normal singularities is characterized by the splitting of reflexive version of the first Cartier operator, it is stronger than $F$-purity (see \cite[Section 3.3]{Kawakami-Takamatsu} for example).

In \cite{Kawakami-Takamatsu}, we proved that a plt surface singularity with reduced boundary
is $F$-liftable if and only if it is $F$-pure, with the single exception of
the boundary-free rational double point (RDP for short) of type $E_8^1$ in characteristic
$5$.  The remaining case in passing from plt singularities to arbitrary
$F$-pure surface singularities was the cusp case.  Recall that a cusp surface
singularity is a normal surface singularity whose reduced exceptional divisor
on the minimal resolution is either a cycle of smooth rational curves or an
irreducible rational curve with one node.  Cusp singularities are $F$-pure
\cite[Theorem~1.2]{Mehta-Srinivas(surface)}, but their $F$-liftability was left open \cite[Conjecture~4.11]{Kawakami-Takamatsu}.  Moreover,
\cite[Proposition~4.13]{Kawakami-Takamatsu} reduced the classification of $F$-liftable
surface singularities to this case.

The purpose of this note is to settle that problem.

\begin{theo}
\label{thm:main}
    Let $k$ be an algebraically closed field of characteristic $p>0$.
Let $(P\in X)$ be a cusp surface singularity over $k$.  Then
$(P\in X)$ is $F$-liftable.
\end{theo}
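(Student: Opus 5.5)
We will use the characterization of $F$-liftability for normal singularities recalled above: $(P\in X)$ is $F$-liftable if and only if the reflexive first Cartier operator splits, i.e.\ the natural map $F_*Z\Omega^{[1]}_X \to \Omega^{[1]}_X$ (or equivalently the relevant short exact sequence of reflexive differentials) admits a splitting locally at $P$. This condition can be checked after completion and is étale/formally local. So we will work with the complete local ring $\widehat{\mathcal{O}}_{X,P}$ and try to exhibit it explicitly enough that a Frobenius lift can be written down directly.

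The key idea is to use the toric-like structure of cusps. By the classical description (Hirzebruch, Tsuchihashi; in positive characteristic via the degeneration/formal description of the cycle of rational curves), a cusp singularity is the quotient of a formal neighbourhood of the closed orbit in a (non-finite-type) toroidal partial compactification: namely $\widehat{\mathcal{O}}_{X,P}$ is the ring of invariants $\widehat{k[\sigma^\vee\cap M]}^{\,\langle g\rangle}$ of the completed semigroup ring attached to a cone $\sigma\subset N_{\mathbb R}\cong\mathbb R^2$ bounded by a convex polygonal line, under an infinite cyclic group generated by a hyperbolic $g\in GL(M)$ preserving $\sigma$. Concretely, we first show (or cite) that every cusp in characteristic $p$ arises this way, using that the exceptional cycle with its self-intersection data determines the formal completion, since $H^1$ obstructions for cycles of rational curves vanish appropriately and the singularity is $F$-pure/log canonical. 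Then we take the canonical Frobenius lift on the monomial ring $W_2(k)[\sigma^\vee\cap M]$, $\chi^m\mapsto \chi^{pm}$, which is functorial in automorphisms of the lattice $M$; hence it commutes with $g$ and extends to completions, descending to the invariant ring lifted to $W_2(k)$. Equivalently, in terms of Cartier operators, the toric splitting of $F_*Z\Omega^1$ given by monomial projection is $g$-equivariant and descends.

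Alternatively, if the invariant-ring description over $W_2$ is awkward (the quotient by an infinite group is not a finite quotient), we will instead work on the minimal resolution $\pi\colon Y\to X$ with reduced exceptional cycle $E$: the pair $(Y,E)$ is locally toric near $E$, and we construct a lift of $(Y,E)$ with a Frobenius lift compatible with $E$ by gluing the toric lifts on the charts along the cycle; the gluing is consistent because transition maps are monomial. Then we push forward: since $R^1\pi_*\mathcal{O}_Y$ vanishes only up to length one for cusps (they are minimally elliptic), we must instead use that $F$-liftability of a normal surface is equivalent to splitting of $\Omega^{[1]}$-type sequences, which are reflexive and determined on $Y\setminus E = X\setminus P$; a splitting of $F_*Z\Omega^1_Y(\log E)\to\Omega^1_Y(\log E)$ restricts to one on the punctured neighbourhood and extends by reflexivity.

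The main obstacle is the first step: establishing that a cusp in characteristic $p$ has the monomial/toroidal formal model (so that the logarithmic Frobenius lift exists on the charts compatibly and globally along the whole cycle, including the nodal-curve case, which we handle by passing to an étale cyclic cover of the cycle of length $\ge 2$ and noting that descent of splittings under this étale-in-codimension-one quotient of order prime to $p$ or via the explicit $g$-equivariance). I expect the gluing compatibility around the entire cycle (the monodromy $g$) to be where care is needed; I would try first to verify that the log Cartier splitting is canonical (independent of chart), which forces compatibility automatically.
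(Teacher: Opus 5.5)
\textbf{There is a genuine gap: the toroidal normal form is assumed, not proved.} Your argument depends entirely on the claim that every cusp in characteristic $p$ is formally isomorphic to a Hirzebruch--Tsuchihashi toroidal cusp. Equivalently, $(Y,E)$ would need a toroidal atlas near $E$ with monomial transition maps. Granting that, the lift $\chi^m\mapsto\chi^{pm}$ is $g$-equivariant and the rest is routine. But this claim is the whole difficulty, you do not prove it, and the tautness results you would want to cite are complex-analytic.

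The justification you sketch (``the $H^1$ obstructions vanish'') is false in general. Already for the toroidal model, $\Theta_Y(-\log E)\cong\Omega^1_Y(\log E)$ is, near $E$, the bundle $N\otimes\sO$ twisted by the monodromy $g$. Hence
\[
H^1(E,\Theta_Y(-\log E)|_E)\cong\operatorname{coker}(g-1\colon N\otimes k\to N\otimes k),
\]
which is nonzero exactly when $p$ divides $\det(g-1)=2-\operatorname{tr}(g)$, i.e.\ when $p$ divides $\det(E_i\cdot E_j)$. For example, take a cycle of two $(-3)$-curves with $p=5$. Since $H^2$ of coherent sheaves vanishes on the formal neighbourhood $\hat Y$ of $E$, this group is a quotient of $H^1(\hat Y,\Theta_{\hat Y}(-\log E))$. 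So such cusps have nontrivial first-order locally trivial deformations of $(\hat Y,E)$. This does not by itself produce a non-toroidal cusp; tautness on $k$-points could still hold for subtler reasons. It does show that no obstruction-vanishing argument yields the normal form, so a genuinely new argument is needed at this step.

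\textbf{Your fallback route has the same gap.} An snc pair is only locally toric. For an arbitrary cusp, the coordinate changes between charts have the form $x\mapsto ux$, $y\mapsto vy$ with units $u,v$, so they are not monomial. The local log Frobenius lifts, i.e.\ local splittings of $0\to B\Omega^1_Y\to Z\Omega^1_Y(\log E)\to\Omega^1_Y(\log E)\to 0$, are not chart-independent. The obstruction to gluing them is exactly the extension class in
\[
\operatorname{Ext}^1(\Omega^1_Y(\log E),B\Omega^1_Y)=H^1(Y,\Omega^1_Y(\log E)\otimes B\Omega^1_Y),
\]
and proving that this vanishes is the real content of the theorem.

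\textbf{How the paper handles it, with no classification.}
\begin{enumerate}
\item It uses $K_Y+E=0$ to get $\Omega^1_Y(\log E)^*\cong\Omega^1_Y(\log E)$.
\item It reduces the vanishing to surjectivity of $H^1(\Omega^1_Y(\log E))\to H^1(\Omega^1_Y(\log E)\otimes F_*\sO_Y)$.
\item It proves $H^1(\Omega^1_Y(\log E)\otimes F_*\sO_Y(-E))=0$. This uses the triviality of $\Omega^1_Y(\log E)|_{E_i}$ and an induction over cycles $Z$, choosing $E_i$ with $Z\cdot E_i<0$.
\item It shows $F^*$ is an isomorphism on $H^1(E,\Omega^1_Y(\log E)|_E)$, via $\Coker(\sO_E\to F_*\sO_E)\cong\bigoplus_i B\Omega^1_{E_i}$ and $H^*(\PP^1,B\Omega^1_{\PP^1})=0$.
\end{enumerate}
The last step gives an isomorphism rather than a vanishing. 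That is exactly what is needed, since $H^1(E,\Omega^1_Y(\log E)|_E)$ can be nonzero, as computed above.
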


Combining Theorem~A with \cite[Theorem~A and Proposition~4.13]{Kawakami-Takamatsu}, we obtain the complete classification of Frobenius liftable surface singularities.

\begin{COR}
Let $k$ be an algebraically closed field of characteristic $p>0$.
Let $(P\in X,B)$ be a surface singularity over $k$ such that $B$ is
reduced.  Then $(P\in X,B)$ is $F$-liftable if and only if it is
$F$-pure and the following does not hold:
\[
    p=5,\qquad B=0,\qquad\text{and}\qquad
    (P\in X)\text{ is an RDP of type }E_8^1.
\]
\end{COR}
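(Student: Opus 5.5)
The plan is to deduce the Corollary formally from Theorem~\ref{thm:main} and the results of \cite{Kawakami-Takamatsu}, treating the two implications separately.

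\textbf{F-liftable implies the condition.} Suppose $(P\in X,B)$ is $F$-liftable. Then it is $F$-pure: $F$-liftability of a normal pair is characterized by a splitting of the reflexive (log) first Cartier operator, and such a splitting yields an $F$-splitting compatible with $B$ \cite[Section~3.3]{Kawakami-Takamatsu}. It remains to rule out the exceptional case. If $p=5$, $B=0$ and $(P\in X)$ is an RDP of type $E_8^1$, then the singularity is plt with empty boundary. By \cite[Theorem~A]{Kawakami-Takamatsu}, it is precisely the $F$-pure plt surface singularity that fails to be $F$-liftable. This contradicts our assumption, so this direction is complete.

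\textbf{The condition implies F-liftable.} Let $(P\in X,B)$ be $F$-pure with $B$ reduced, and assume it is not the $E_8^1$ RDP in characteristic $5$ with $B=0$. Since $F$-pure pairs are log canonical, we can split into cases according to the classification of log canonical surface pairs with reduced boundary.

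\begin{enumerate}[(i)]
\item If $(X,B)$ is plt, then \cite[Theorem~A]{Kawakami-Takamatsu} gives $F$-liftability directly, because the exceptional case has been excluded.
\item If $(X,B)$ is log canonical but not plt, then $(P\in X,B)$ lies in one of the remaining classes: simple elliptic, cusp, or a quotient of one of these, possibly carrying boundary. \cite[Proposition~4.13]{Kawakami-Takamatsu} reduces $F$-liftability of every such $F$-pure singularity to $F$-liftability of cusp singularities with $B=0$.
\end{enumerate}

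In case (ii), I would check that the hypotheses of \cite[Proposition~4.13]{Kawakami-Takamatsu} are exactly ``$F$-pure, not plt.'' Its conclusion is that every such singularity is $F$-liftable as soon as all cusps are, for instance by descending $F$-liftings along index-one covers of degree prime to $p$, or by handling the ordinary simple elliptic case directly. Theorem~\ref{thm:main} supplies $F$-liftability of all cusps, so Proposition~4.13 applies and case (ii) is done.

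The only real content is Theorem~\ref{thm:main}. The Corollary itself is bookkeeping. The step needing care is checking that the case division in \cite[Proposition~4.13]{Kawakami-Takamatsu} matches the dichotomy above, namely plt versus log canonical but not plt. In particular, one must confirm that the $E_8^1$ exception can only arise in the plt case with $B=0$, so that no further exceptions appear in case (ii).
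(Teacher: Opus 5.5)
Your proposal is correct and matches the paper's argument: the paper obtains the corollary by combining Theorem~A with \cite[Theorem~A]{Kawakami-Takamatsu}, which handles the plt case including the non-$F$-liftable $E_8^1$ RDP in characteristic $5$, and with \cite[Proposition~4.13]{Kawakami-Takamatsu}, which reduces the remaining $F$-pure cases to cusps. Your guesses about how Proposition~4.13 works internally, via index-one covers or the ordinary simple elliptic case, are not needed, since the paper simply cites that result.
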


%%%%%%%%%%%%%%%%%%%%%%%%%%%%%%%%%%%%%%%%%%%%%%%%%%%%%%%%%%%%%%%%%%%%%%%%%%%%%%%%%%%%%%%%%%%%%%%%%%%%%%%%%%%%%%%%%%%%%%%%%%%%%%%%%%%%%%%%%%%%%%%%%%%%%%%%%%%%
\section{Preliminaries}

\subsection{Notation and terminology}\label{subsection:Notation and terminology}
Throughout the paper, we work over a fixed algebraically closed field $k$ of characteristic $p>0$ unless stated otherwise.
A \textit{variety} means an integral separated scheme of finite type. 
A \textit{surface} is a variety of dimension two.

\begin{defn}[F-liftability]\label{def:F-lift}
Let $X$ be a normal variety and $B=\sum_{r=1}^n B_r$ a reduced divisor on $X$, where every $B_r$ is an irreducible component.  
We denote the ring of Witt vectors of length two by $W_2(k)$.

We say that $(X,B)$ is \textit{$F$-liftable} 
if there exist 
\begin{itemize}
	\item a flat morphism $\widetilde{X} \to \Spec W_2(k)$ together with a closed immersion $i\colon X\hookrightarrow \widetilde{X}$, 
	 \item a closed subscheme $\widetilde{B}_r$ of $\widetilde{X}$ flat over $W_2(k)$ for all $r\in\{1,\ldots,n\}$, and 
	 \item a morphism $\widetilde{F}\colon \widetilde{X}\to \widetilde{X}$ 
     over the Frobenius of $W_2 (k)$
\end{itemize}
	such that 
\begin{itemize}
	\item the induced morphism $i\times_{W_2(k)}k \colon X\to \widetilde{X}\times_{W_2(k)} k$ is an isomorphism, 
	\item $(i\times_{W_2(k)}k)(B_r)= \widetilde{B}_r\times_{W_2(k)} k$ for all $r\in\{1,\ldots,n\}$, and 
    \item $\widetilde{F}\circ i=i \circ F$, and $\widetilde{F}^{*}(\widetilde{B}_r|_{\tilde{U}})=p(\widetilde{B}_r|_{\tilde{U}})$ for all $r\in\{1,\ldots,n\}$, where $\tilde{U}\subset \tilde{X}$ is the lift of the log smooth locus of $(X,B)$.
\end{itemize}
\end{defn}

\subsection{Cartier operator}

Let $Y$ be a smooth variety, and let $E$ be a reduced divisor on $Y$ with simple normal crossing (snc, for short) support.
Let $A$ be a $\Q$-divisor whose support of the fractional part $A-\lfloor A\rfloor$ is contained in $E$.
We define $\sO_Y$-modules $B\Omega_Y^i(\log E)(pA)$ and $Z\Omega_Y^i(\log E)(pA)$ to be the boundaries and the cycles of $F_*\Omega_Y^i(\log E)(pA)$ at $i$: 
\begin{align*} 
\label{eq:definition-of-B1} B \Omega_Y^i(\log E)(pA) &\coloneqq {\rm Im}\left(F_*\Omega_Y^{i-1}(\log E)(pA) \xrightarrow{F_*d}  F_*\Omega_Y^{i}(\log E)(pA)\right),\\ 
Z\Omega_Y^i(\log E)(pA) &\coloneqq \Ker\left(F_*\Omega_Y^{i}(\log E)(pA) \xrightarrow{F_*d}  F_*\Omega_Y^{i+1}(\log E)(pA)\right) \nonumber,
\end{align*}
where $F_*\Omega_Y^{i}(\log E)(pA)$ denotes $F_*\left(\Omega_Y^{i}(\log E)\otimes \sO_Y(\lfloor pA \rfloor)\right)$.
Then $B\Omega^i_Y(\log E)(pA)$ and $Z\Omega^i_Y(\log E)(pA)$ are locally free $\sO_Y$-modules \cite[Lemma 5.10]{KTTWYY1}.

By definition, we obtain the short exact sequence
\begin{equation}\label{eq:ZOmegaB}
    0\to Z\Omega_Y^i(\log E)(pA) \to F_{*}\Omega_Y^i(\log E)(pA) \to B\Omega_Y^{i+1}(\log E)(pA) \to 0.
\end{equation}
Taking $i=0$ and $A=-\frac{1}{p}E$ in \eqref{eq:ZOmegaB}, we obtain the short exact sequence
\begin{equation}\label{eq:OOB}
0\to \sO_Y(-E) \to F_{*}\sO_Y(-E) \to B\Omega^{1}_Y(\log E)(-E) \to 0.
\end{equation}

The Cartier isomorphism (\cite[Lemma 3.3]{Hara98}, \cite[equation (5.4.2)]{KTTWYY1}) gives the short exact sequence
\begin{equation} \label{eq:BZOmega'}
0 \to B\Omega^i_Y(\log E)(pA) \to Z\Omega^i_Y(\log E)(pA) \xrightarrow{C} \Omega^i_Y(\log E)(A) \to 0.
\end{equation}

Taking $i=1$ and $A=0$, we obtain the short exact sequence
\begin{equation} \label{eq:BZOmega}
0 \to B\Omega^1_Y(\log E) \to Z\Omega^1_Y(\log E)\xrightarrow{C} \Omega^1_Y(\log E) \to 0.
\end{equation}
We note that
\begin{equation} \label{eq:B=Blog}
B\Omega^1_Y(\log E)\coloneqq {\rm Im}\left(F_*\sO_Y\xrightarrow{F_*d}  F_*\Omega_Y^{1}(\log E)\right)={\rm Im}\left(F_*\sO_Y\xrightarrow{F_*d}  F_*\Omega_Y^{1}\right)=:B\Omega^1_Y
\end{equation}
holds.

We need the following theorem:
\begin{thm}[\textup{\cite[Variant 3.3.2]{AWZ} (see also \cite[Theorem 3.3]{Kawakami-Takamatsu})}]\label{thm:F-lift characterization}
    The pair $(Y,E)$ is $F$-liftable if and only if \eqref{eq:BZOmega} splits.
\end{thm}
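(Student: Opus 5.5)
The plan is to show that, locally on $Y$, $F$-lifts of $(Y,E)$ and local splittings of \eqref{eq:BZOmega} are torsors under the same sheaf of groups. I will then produce an equivariant comparison map between them, so that a global section of one exists exactly when a global section of the other does.

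\emph{Step 1 (local existence).} Since $E$ is snc, Zariski locally we have étale coordinates $x_1,\dots,x_n$ with $E=(x_1\cdots x_m=0)$. There we take $\widetilde Y$ étale over $\mathbb{A}^n_{W_2(k)}$, with $\widetilde E_r=(\widetilde x_r=0)$. We take the Frobenius lift determined by $\widetilde F^*\widetilde x_i=\widetilde x_i^p$; this extends uniquely by étaleness and satisfies $\widetilde F^*\widetilde E_r=p\widetilde E_r$. So local $F$-lifts exist, and local splittings of \eqref{eq:BZOmega} exist because $\Omega^1_Y(\log E)$ is locally free.

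\emph{Step 2 (the comparison map).} Given an $F$-lift $(\widetilde Y,\widetilde E,\widetilde F)$, write $\widetilde F^*\widetilde f=\widetilde f^{\,p}+p\,\delta(\widetilde f)$ with $\delta(\widetilde f)\in\mathcal O_Y$. Define
\[
\sigma_{\widetilde F}\colon \Omega^1_Y(\log E)\to F_*\Omega^1_Y(\log E),\qquad df\mapsto f^{p-1}df+d\,\delta(\widetilde f).
\]
For a local equation $t$ of $E_r$, the condition $\widetilde F^*\widetilde t=\widetilde u\,\widetilde t^{\,p}$ with $u$ a unit gives $d\log t\mapsto d\log t+d\log u$. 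I will check three things:
\begin{itemize}
\item the map $\sigma_{\widetilde F}$ is additive and satisfies the Leibniz rule, using the standard Witt-vector identities for $\delta$;
\item it lands in $Z\Omega^1_Y(\log E)$;
\item it satisfies $C\circ\sigma_{\widetilde F}=\mathrm{id}$, since $C(f^{p-1}df)=df$ and $C$ kills exact forms.
\end{itemize}
Hence $\sigma_{\widetilde F}$ is a splitting of \eqref{eq:BZOmega}. Moreover, isomorphic $F$-lifts give the same splitting.

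\emph{Step 3 (torsor structures and equivariance).} Fix the underlying lift $\widetilde Y$. Two Frobenius lifts on it differ by $p\,\theta$ with $\theta\in \mathcal{H}om(\Omega^1_Y(\log E),F_*\mathcal O_Y)$; the log condition is exactly what forces $\theta$ to be log. Automorphisms of $(\widetilde Y,\widetilde E)$ lifting the identity are given by $\mathcal{H}om(\Omega^1_Y(\log E),\mathcal O_Y)$. Conjugating $\widetilde F$ by such an automorphism $D$ changes $\theta$ by $F^*D$, which corresponds to $D\mapsto D^p$ on the $\mathcal O_Y$-part. Any two local lifts $(\widetilde Y,\widetilde E)$ are isomorphic.

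Consequently, isomorphism classes of local $F$-lifts form a sheaf torsor under the quotient $\mathcal{H}om(\Omega^1_Y(\log E),F_*\mathcal O_Y)/\mathcal{H}om(\Omega^1_Y(\log E),\mathcal O_Y)$. Via $d$ and the sequence $0\to\mathcal O_Y\to F_*\mathcal O_Y\to B\Omega^1_Y\to0$ (this is \eqref{eq:OOB}-type with $E=0$, together with \eqref{eq:B=Blog}), this quotient is $\mathcal{H}om(\Omega^1_Y(\log E),B\Omega^1_Y)$. Splittings of \eqref{eq:BZOmega} form a torsor under the same sheaf. By the formula in Step 2, changing $\delta$ by $\theta$ changes $\sigma$ by $d\circ\theta$, so $\widetilde F\mapsto\sigma_{\widetilde F}$ is equivariant. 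It is therefore an isomorphism of torsors, and global sections correspond.

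\emph{Step 4 (gluing).} A global splitting yields compatible local isomorphism classes of $F$-lifts. To glue actual objects, I must check that an $F$-lift has no nontrivial automorphisms, i.e.\ that the stack of $F$-lifts is a sheaf. An automorphism $D$ commuting with $\widetilde F$ must satisfy $F^*D=0$ in the appropriate sense. This forces $D=0$, because $F^*$ is injective on $\mathcal{H}om(\Omega^1_Y(\log E),\mathcal O_Y)$, as $Y$ is reduced. Thus local $F$-lifts glue uniquely.

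I expect the main obstacle to be this rigidity and gluing step, together with the careful bookkeeping of the log condition $\widetilde F^*\widetilde B_r=p\widetilde B_r$ in Step 3. The Witt-vector identities in Step 2 are routine.
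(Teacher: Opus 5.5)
Your proposal is correct and is essentially the standard argument behind \cite[Variant 3.3.2]{AWZ}, which the paper cites without reproducing. In that argument, $\widetilde F\mapsto \frac{1}{p}\widetilde F^*$ is an equivariant map from the torsor of local $F$-lifts to the torsor of splittings of \eqref{eq:BZOmega}, both under $\mathcal{H}om(\Omega^1_Y(\log E),B\Omega^1_Y)$; the $F$-lifts form a sheaf because they have no automorphisms, so local lifts glue.

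One small slip in Step 2: since $\widetilde F^*\widetilde t$ reduces to $t^p$, the unit is $\widetilde u=1+p\widetilde v$. The correction is therefore the exact form $dv$, not $d\log u$, which is zero on $Y$.
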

% \begin{proof}
%     The assertion follows from 
% \end{proof}

\section{Proof of the main theorem}\label{sec:F-lift and Car operator}

We refer to \cite[Notation and terminology (6)]{Kawakami-Takamatsu} for the definition of a \emph{germ}.

\begin{conv}
    Let $(x\in X)$ be a germ of a cusp surface singularity, and let $\pi\colon Y\to X$ be the minimal log resolution with reduced divisor $E=\sum_i E_i$.
    Here, the minimal log resolution $\pi\colon Y \to X$ is constructed as follows. We first take the minimal resolution of $X$. If it is already a log resolution, then it coincides with the minimal log resolution. Otherwise, the exceptional divisor is an irreducible curve with a node, so the minimal resolution is not a log resolution. In this case, we obtain $\pi\colon Y\to X$ by blowing up the node of the exceptional divisor.
    In either case, note that
\[
K_Y+E=\pi^*K_X=0.
\]
holds. 
Thus, \[
\Omega_Y^1(\log E)^*\coloneqq \mathcal{H}\! \mathit{om}_Y(\Omega_Y^1(\log E),\sO_Y)=\mathcal{H}\! \mathit{om}_Y(\Omega_Y^1(\log E),\omega_Y(E))\cong \Omega^1_Y(\log E)\]
holds.
Note that $E_i\cong \PP^1$ for all $i$. 
\end{conv}

\begin{lem}\label{lem:trivial decomp}
    For every $i$, 
    $\Omega^1_Y(\log E)|_{E_i}=\sO_{E_i}^{\oplus 2}$ and
    $\big(F^{*}\Omega^1_Y(\log E)\big)\big|_{E_i}=\sO_{E_i}^{\oplus 2}$.
\end{lem}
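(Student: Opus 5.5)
The plan is to use the residue sequence along $E_i$. We then pin down the restricted bundle by computing its degree and exhibiting a nowhere-vanishing section.

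For a component $E_i\cong\PP^1$ of the snc divisor $E$, there is the standard exact sequence
\[
0\to \Omega^1_{E_i}\big(\log (E-E_i)|_{E_i}\big)\to \Omega^1_Y(\log E)|_{E_i}\xrightarrow{\mathrm{res}} \sO_{E_i}\to 0 .
\]
Here $(E-E_i)|_{E_i}$ consists of the points where $E_i$ meets the other components. By the Convention, $E$ is a cycle of smooth rational curves: the blow-up of a nodal curve yields a cycle of length two, and longer cycles occur directly. Hence every $E_i$ meets the rest of $E$ in exactly two points. (The case of a cycle of length two needs a remark, since the two components might meet in two distinct points; that is still two points on each, which is what matters.) So the left term is $\omega_{\PP^1}(2\,\mathrm{pt})\cong\sO_{E_i}$, and $\Omega^1_Y(\log E)|_{E_i}$ is an extension of $\sO_{E_i}$ by $\sO_{E_i}$. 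Because $\mathrm{Ext}^1_{\PP^1}(\sO,\sO)=H^1(\PP^1,\sO)=0$, the extension splits and we get $\Omega^1_Y(\log E)|_{E_i}\cong\sO_{E_i}^{\oplus2}$.

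For the Frobenius pullback we have $\big(F^*\Omega^1_Y(\log E)\big)|_{E_i}\cong F_{E_i}^*\big(\Omega^1_Y(\log E)|_{E_i}\big)$, since restriction commutes with pullback along the absolute Frobenius. The Frobenius of $Y$ restricts to the Frobenius of $E_i$, that is, $F_Y\circ\iota=\iota\circ F_{E_i}$. Therefore $F^*_{E_i}\sO_{E_i}^{\oplus2}=\sO_{E_i}^{\oplus2}$, which gives the second claim.

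The only point requiring care is the claim that each component meets the others in exactly two points. This follows from the combinatorics of the minimal log resolution described in the Convention. Alternatively, one can argue by adjunction: $K_Y+E=0$ gives $(K_Y+E)\cdot E_i=0$, and hence $\deg(K_{E_i}+(E-E_i)|_{E_i})=0$, so $(E-E_i)\cdot E_i=2$. The snc condition turns this intersection number into two reduced points. This adjunction route avoids any case analysis, and I would use it.
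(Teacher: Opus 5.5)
Your proof is correct and is the same argument as the paper's: restrict the residue sequence to $E_i$, identify the left term with $\omega_{E_i}\big((E-E_i)|_{E_i}\big)\cong\sO_{\PP^1}$, split the extension using $H^1(\PP^1,\sO_{\PP^1})=0$, and pull back along Frobenius. Your adjunction computation, $(K_Y+E)\cdot E_i=0$ giving $(E-E_i)\cdot E_i=2$, together with the snc condition, justifies the degree count that the paper states without comment.
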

\begin{proof}
    By the  restriction of the residue exact sequence (cf.~\cite[(1.4.2)]{Gra}), we have the following short exact sequence
    \[
    0 \to \Omega^1_{E_i}(\log (E-E_i)|_{E_i})\to \Omega^1_Y(\log E)|_{E_i} \to \sO_{E_i} \to 0.
    \]
    Note that $\Omega^1_{E_i}(\log (E-E_i)|_{E_i})=\omega_{E_i}(E-E_i)=\sO_{\PP^1}(-2+2)=\sO_{\PP^1}$
    Since we have
    \[\mathrm{Ext}^1(\sO_{E_i},\sO_{E_i})\cong H^1(\PP^1,\sO_{\PP^1})=0,\] we obtain
    \[
    \Omega^1_Y(\log E)|_{E_i}=\sO_{E_i}^{\oplus 2},
    \]
    and the first assertion holds.
    By pulling back by $F$, we obtain the second assertion.
\end{proof}

\begin{lem}\label{lem:vanishing}
    We have
    \[
    H^1(\Omega^1_Y(\log E)\otimes F_{*}\sO_Y(-E))=0.
    \]
\end{lem}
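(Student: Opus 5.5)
The plan is to reduce the vanishing to cohomology on $Y$, then to cohomology on infinitesimal thickenings of $E$, and finally to line bundles on the components $E_i\cong\PP^1$, where Lemma~\ref{lem:trivial decomp} applies.

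\textbf{Step 1: reduction to $Y$.} By the projection formula,
\[
\Omega^1_Y(\log E)\otimes F_*\sO_Y(-E)\cong F_*\big(F^*\Omega^1_Y(\log E)\otimes\sO_Y(-E)\big).
\]
Since $F$ is affine, it suffices to show $H^1(Y,\mathcal{G})=0$ for the locally free sheaf
\[
\mathcal{G}\coloneqq F^*\Omega^1_Y(\log E)(-E).
\]

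\textbf{Step 2: reduction to thickenings of $E$.} The germ $X$ is affine and $\pi$ is proper, so $H^1(Y,\mathcal{G})$ is a finitely generated module supported at $x$. By the theorem on formal functions it is the inverse limit of the groups $H^1(\mathcal{G}\otimes\sO_Z)$, taken over effective cycles $Z$ supported on $E$. It therefore suffices to prove
\[
H^1(\mathcal{G}\otimes\sO_Z)=0 \quad\text{for every nonzero effective } Z \text{ with } \Supp Z\subset E.
\]

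\textbf{Step 3: induction on $Z$.} I argue by induction on the sum of the coefficients of $Z$, and this is where the curve to peel off must be chosen correctly.

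Since the intersection matrix of $E$ is negative definite, $Z^2<0$. Hence there is a component $E_i\le Z$ with $Z\cdot E_i\le -1$. Put $Z''=Z-E_i$ and use the exact sequence
\[
0\to \sO_{E_i}(-Z'')\to \sO_Z\to \sO_{Z''}\to 0.
\]
Tensoring with $\mathcal{G}$, and using the induction hypothesis for $Z''$ (trivially true if $Z''=0$), it suffices to show $H^1(E_i,\mathcal{G}\otimes\sO_{E_i}(-Z''))=0$.

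By Lemma~\ref{lem:trivial decomp} we have $(F^*\Omega^1_Y(\log E))|_{E_i}\cong\sO_{E_i}^{\oplus 2}$. Hence
\[
\mathcal{G}\otimes\sO_{E_i}(-Z'')\cong \sO_{\PP^1}(d)^{\oplus 2},\qquad d=-(E+Z'')\cdot E_i .
\]
To compute $d$, I use $K_Y+E=0$ together with adjunction on the smooth rational curve $E_i$, namely $K_Y\cdot E_i+E_i^2=-2$. This gives $E\cdot E_i=-K_Y\cdot E_i=E_i^2+2$. Therefore
\[
d=-(E+Z)\cdot E_i+E_i^2=-E_i^2-2-Z\cdot E_i+E_i^2=-2-Z\cdot E_i\ge -1 .
\]
So $H^1(\PP^1,\sO(d))=0$, which closes the induction.

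The main point to get right is the choice of $E_i$: it must be chosen with $Z\cdot E_i\le -1$, using negative definiteness. Once that is done, everything cancels via $K_Y+E=0$, and no case distinction between the cycle case and the nodal case is needed. That is because the convention already passes to the minimal log resolution, on which every $E_i\cong\PP^1$ is smooth.
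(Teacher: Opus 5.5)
Your proposal is correct and follows the paper's proof essentially step for step. The theorem on formal functions reduces the statement to cycles $Z$ supported on $E$; you induct on $\sum_i r_i$ by peeling off a component $E_i\le Z$ with $Z\cdot E_i<0$; and Lemma~\ref{lem:trivial decomp} reduces everything to $H^1(\PP^1,\sO_{\PP^1}(d))^{\oplus 2}$ with $d\ge -1$. The only differences are cosmetic: you fold the base case into the inductive step (taking $Z''=0$), and you obtain $(E-E_i)\cdot E_i=2$ from adjunction and $K_Y+E=0$ rather than reading it off the cycle structure.
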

\begin{proof}
    By the formal function theorem \cite[Chapter III Theorem 11.1]{Har}, it suffices to show that the following claim:
    \begin{cl}\label{cln:vanishing}
        Take a cycle $Z=\sum_{i} r_iE_i$ satisfying $r_i \geq 0$ and $\sum_i r_i\geq 1$.
        Then \[
        H^1(Z, F^*\Omega^1_Y(\log E)\otimes\sO_Y(-E))=0.
        \]
        Here, $F^*\Omega^1_Y(\log E)\otimes\sO_Y(-E)$ denotes $(F^*\Omega^1_Y(\log E))\otimes\sO_Y(-E)$.
    \end{cl}
    \begin{proof}[Proof of the claim]
        We prove by the induction on $\sum_i r_i$.
        First, suppose that $\sum_i r_i=1$, i.e., $Z=E_i$ for some $i$.
        Then it follows from Lemma \ref{lem:trivial decomp} that
        \[
        F^{*}\Omega^1_Y(\log E)\otimes\sO_Y(-E)\otimes \sO_{E_i}=F^{*}(\Omega^1_Y(\log E)|_{E_i})\otimes \sO_{E_i}(-E)=
        \sO_{E_i}(-E)^{\oplus 2}.
        \]
        Since $n\coloneqq E_i\cdot (-E)=-E_i\cdot E_i - E_i\cdot (E-E_i)\geq 1-2=-1$, we have
        \[
        H^1(E_i,\sO_{E_i}(-E))=H^1(\PP^1, \sO_{\PP^1}(n))=0.
        \]

        Now, take a cycle $Z=\sum_{i} r_iE_i$ satisfying $\sum_i r_i\geq 1$.
        By negativity lemma, we can find $E_i$ such that $Z\cdot E_i<0$.
        Consider the short exact sequence
        \[
        0\to \sO_{E_i}(-Z+E_i) \to  \sO_Z \to \sO_{Z-E_i} \to 0.
        \]
        By induction hypothesis, it suffices to show that
        \[
        H^1(E_i, F^{*}\Omega^1_Y(\log E)\otimes \sO_Y(-E-Z+E_i))=0.
        \]
        By Lemma \ref{lem:trivial decomp}, we have
        \[
        F^{*}\Omega^1_Y(\log E)\otimes \sO_Y(-E-Z+E_i)\otimes \sO_{E_i}=\sO_{E_i}(-E-Z+E_i)^{\oplus 2}.
        \]
        Since $(E-E_i)\cdot E_i=2$, we have 
        \[
        \sO_{E_i}(-E-Z+E_i)=\sO_{\PP^1}(-2+m)
        \]
        where $m\coloneqq E_i\cdot (-Z)>0$.
        Thus, 
        \[
        H^1(\sO_{E_i}(-E-Z+E_i)^{\oplus 2})=H^1(\PP^1,\sO_{\PP^1}(-2+m))^{\oplus 2}=0,
        \]
        and the claim is proved.
\end{proof}
\end{proof}
% \begin{rem}
%     In the proof, the condition $E_i\cdot (E-E_i)\leq 2$ is essential.
% \end{rem}

The following lemma seems to be well-known but we include the proof for the convenience.
Note that we need the following lemma when $X$ is reducible.

\begin{lem}\label{lem:S_1-sheaf}
    Let $X$ be a Noetherian scheme and let $\phi\colon \mathcal{F}\to \mathcal{G}$ be a morphism of coherent $\sO_{X}$-modules.
    Suppose that $\mathcal{F}$ satisfies Serre's condition $(S_1)$ and $\phi$ is injective at every generic point of $X$.
    Moreover, suppose that $\Supp \mathcal{F} = X$.
    Then $\phi$ is injective.
\end{lem}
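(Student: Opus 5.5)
We argue by showing that the kernel $\mathcal{K}\coloneqq \Ker\phi$ vanishes, which is a coherent subsheaf of $\mathcal{F}$. The question is local, so we may assume $X=\Spec R$ is affine with $R$ Noetherian, $\mathcal{F}=\widetilde{M}$ and $\mathcal{K}=\widetilde{N}$ for a submodule $N\subset M$. If $N\neq 0$, we pick an associated prime $\mathfrak{q}\in \mathrm{Ass}_R(N)$. Since $N\subset M$, we have $\mathrm{Ass}(N)\subset \mathrm{Ass}(M)$, so $\mathfrak{q}\in \mathrm{Ass}(M)$.

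The key step is to show that every associated prime of $M$ is a minimal prime of $R$, that is, a generic point of $X$. Serre's condition $(S_1)$ for $\mathcal{F}$ means that $\operatorname{depth} M_{\mathfrak{p}}\geq \min(1,\dim M_{\mathfrak{p}})$ for all $\mathfrak{p}$. Equivalently, $M$ has no embedded associated primes: every associated prime of $M$ is minimal in $\Supp M$. Indeed, if $\mathfrak{q}\in\mathrm{Ass}(M)$ then $\operatorname{depth}M_{\mathfrak{q}}=0$, so $(S_1)$ forces $\dim M_{\mathfrak{q}}=0$, i.e.\ $\mathfrak{q}$ is minimal in $\Supp M$. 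Now we use the hypothesis $\Supp\mathcal{F}=X$, i.e.\ $\Supp M=\Spec R$. Then the minimal primes of $\Supp M$ are exactly the minimal primes of $R$, which are the generic points of $X$. This is the only place where the support hypothesis is needed, and it is essential when $X$ is reducible: a sheaf supported on a proper component could satisfy $(S_1)$ while avoiding the generic points of $X$ altogether.

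Hence $\mathfrak{q}$ is a generic point $\eta$ of $X$, and $\mathfrak{q}\in\mathrm{Ass}(N)$ gives $N_{\mathfrak{q}}\neq 0$. Localization is exact, so $N_{\mathfrak{q}}=\Ker(\phi_{\eta})$. By assumption $\phi$ is injective at $\eta$, so this kernel is $0$, a contradiction. Therefore $N=0$ and $\phi$ is injective.

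I do not expect a real obstacle; the only point needing care is the translation of $(S_1)$ into ``no embedded associated primes'' relative to $\Supp M$, combined with $\Supp M=X$.
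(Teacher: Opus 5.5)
Your proof is correct and is essentially the paper's argument: $(S_1)$ together with $\Supp\mathcal{F}=X$ forces every associated prime of $\mathcal{F}$ to be a generic point of $X$, where the kernel vanishes, so $\mathrm{Ass}(\mathcal{K})\subseteq\mathrm{Ass}(\mathcal{F})\cap\Supp(\mathcal{K})=\emptyset$. Only your side remark is off: a sheaf whose support is a whole irreducible component does contain a generic point of $X$, and the support hypothesis is needed even for irreducible $X$ (e.g.\ a skyscraper sheaf at a closed point mapping to $0$), because what the argument really uses is that $\Supp\mathcal{F}$ is a union of irreducible components of $X$.
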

\begin{proof}
    Let $\mathcal{K}\coloneqq \mathrm{ker}(\phi)$.
    Take $\p\in \mathrm{Ass}(\mathcal{F})$.
    Then $\mathrm{depth}_{\p\sO_{X,\p}}\mathcal{F}=0$ as $\p\sO_{X,\p}\in \mathrm{Ass}(\mathcal{F}_{\p})$.
    Since $\mathcal{F}$ satisfies $(S_1)$, it follows that $\dim \sO_{X,\p} = \dim \Supp \mathcal{F}_{\p}=0$.
    By assumption, $\p\notin \Supp(\mathcal{K})$.
    Therefore, we have 
    \[
    \mathrm{Ass}(\mathcal{K})\subseteq \mathrm{Ass}(\mathcal{F})\cap \mathrm{Supp}(\mathcal{K})=\emptyset,
    \]
    and thus $\mathcal{K}=0$.
\end{proof}

\begin{prop}\label{prop:isom}
    The Frobenius pullback map
    \[
    H^1(E, \Omega^1_Y(\log E)\otimes \sO_E)\xrightarrow{F^*} H^1(E, \Omega^1_Y(\log E)\otimes F_*\sO_E)
    \] is an isomorphism.
\end{prop}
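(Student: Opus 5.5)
The plan is to compute both sides explicitly, using the normalization of $E$ and the triviality of $\Omega^1_Y(\log E)$ on each component from Lemma \ref{lem:trivial decomp}. Write $V\coloneqq \Omega^1_Y(\log E)\otimes \sO_E$, a locally free sheaf of rank $2$ on $E$. By the projection formula, $\Omega^1_Y(\log E)\otimes F_*\sO_E\cong F_*(F^*V)$, where $F$ is the absolute Frobenius of $E$. Since $F$ is affine, $H^1(E,\Omega^1_Y(\log E)\otimes F_*\sO_E)=H^1(E,F^*V)$. Under this identification the map in the statement becomes the natural $p$-linear map $H^1(E,V)\to H^1(E,F^*V)$, given by $s\mapsto F^*s$ on cochains.

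Since $\pi$ is the minimal log resolution, $E$ is a cycle of smooth rational curves $E_1,\dots,E_n$ meeting transversally at nodes $q_1,\dots,q_n$. This holds even in the nodal irreducible case: after blowing up the node, $E$ becomes a cycle of length $\ge 2$. Let $\nu\colon \tilde E=\bigsqcup_i E_i\to E$ be the normalization. For any locally free sheaf $W$ on $E$ there is an exact sequence
\[
0\to W\to \nu_*\nu^*W\to \textstyle\bigoplus_j W\otimes k(q_j)\to 0 .
\]
We apply it to $W=V$ and to $W=F^*V$. The Frobenius pullback maps the sequence for $V$ to the sequence for $F^*V$, because $F$ commutes with $\nu$ and with restriction to the points $q_j$. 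By Lemma \ref{lem:trivial decomp}, $\nu^*V=\bigoplus_i\sO_{E_i}^{\oplus 2}$ and $\nu^*F^*V=\bigoplus_i\sO_{E_i}^{\oplus2}$. Hence $H^1(\tilde E,\nu^*V)=H^1(\tilde E,\nu^*F^*V)=0$, and the long exact sequences give
\[
H^1(E,V)\cong \Coker\Big(\textstyle\bigoplus_i H^0(E_i,V|_{E_i})\xrightarrow{\ \delta\ }\bigoplus_j V\otimes k(q_j)\Big),
\]
together with the analogous description for $F^*V$ via a map $\delta'$.

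Next we compare the two presentations. Choose trivializations $V|_{E_i}\cong \sO_{E_i}^{\oplus2}$, so that $H^0(E_i,V|_{E_i})=k^2$; pull them back to obtain trivializations of $F^*V|_{E_i}$. At each node $q_j=E_a\cap E_b$ the two induced bases of the fibre differ by a matrix $M_j\in GL_2(k)$, and the corresponding matrix for $F^*V$ is $M_j^{(p)}$, obtained by raising the entries to the $p$-th power. Thus $\delta'$ is the map $\delta$ with all matrix entries raised to the $p$-th power. On all the terms $k^{2}$, the Frobenius pullback acts by $v\mapsto v^{(p)}$, coordinatewise $p$-th power. This is a $p$-semilinear bijection because $k$ is perfect, and it intertwines $\delta$ and $\delta'$, since $\delta'(v^{(p)})=\delta(v)^{(p)}$. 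A map of two-term complexes consisting of bijections on each term induces a bijection on cokernels. Therefore $F^*\colon H^1(E,V)\to H^1(E,F^*V)$ is bijective, i.e.\ an isomorphism of abelian groups, which is $p$-linear over $k$.

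I expect the main obstacle to be the bookkeeping in the last step. One must check carefully that the Frobenius pullback really is compatible with the normalization sequences and with the chosen trivializations. In particular, the gluing data of $F^*V$ must be exactly $M_j^{(p)}$, and the maps $E_i\to E$ must induce the correct evaluation maps at the two branches of each node. This is where the cycle structure of $E$ and the triviality from Lemma \ref{lem:trivial decomp} are essential. The sheaf-level injectivity of $V\to F_*F^*V$ (by Lemma \ref{lem:S_1-sheaf}, since $E$ is reduced and hence $(S_1)$) is not actually needed for this route, but it serves as a consistency check.
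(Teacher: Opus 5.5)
Your proof is correct, but it takes a different route from the paper's. The paper argues at the sheaf level. It sets $\mathcal{B}_E=\Coker(\sO_E\to F_*\sO_E)$ and deduces from \eqref{eq:OOB} and the depth lemma that $\mathcal{B}_E$ is $(S_1)$. It then uses Lemma~\ref{lem:S_1-sheaf} and the snake lemma to identify $\mathcal{B}_E\cong\bigoplus_i B\Omega^1_{E_i}$; here the finite-length sheaf $\mathcal{C}=\Coker(\sO_E\to\bigoplus_i\sO_{E_i})$ satisfies $\mathcal{C}\cong F_*\mathcal{C}$. By Lemma~\ref{lem:trivial decomp} and $H^j(\PP^1,B\Omega^1_{\PP^1})=0$, the cokernel $\Omega^1_Y(\log E)\otimes\mathcal{B}_E$ of the map in question is acyclic, which gives the result. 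You instead rewrite the target as $H^1(E,F^*V)$ via the projection formula and compare the two normalization long exact sequences directly. The inputs are the same in substance. Frobenius is bijective on the node fibres, which is the paper's $\mathcal{C}\cong F_*\mathcal{C}$. It is also bijective on $H^0(E_i,\sO_{E_i}^{\oplus 2})$, and $H^1(\PP^1,\sO_{\PP^1})=0$; these last two facts together are equivalent to the acyclicity of $B\Omega^1_{\PP^1}$. What you gain is that you bypass the $(S_1)$/depth argument and the identification of $\mathcal{B}_E$ entirely. The bookkeeping you flag as the main obstacle is also lighter than you expect. The morphism of normalization sequences is just $V\otimes(-)$ applied to the Frobenius on $\sO_E\to\nu_*\sO_{\tilde E}\to\bigoplus_j k(q_j)$, and Frobenius commutes with ring maps. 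So you only need bijectivity on the two outer terms of your square, never the explicit matrices $M_j^{(p)}$. The paper's version gives a sheaf-level statement, with isomorphisms on $H^0$ and $H^1$ at once, phrased in the $B\Omega$ language used in the rest of the argument. Yours is shorter and more elementary, and it makes clear that only the nodal structure of $E$ and the triviality of $V$ on each $E_i$ are being used.
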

\begin{proof}
Set $\mathcal{B}_E \coloneqq \Coker (\sO_E \rightarrow F_* \sO_{E})$.
By the following commutative diagram with exact rows and columns:
    \begin{equation*}
\xymatrix{ 
&0\ar[d]& 0\ar[d]& 0\ar[d]& \\
0\ar[r]&\sO_Y(-E)\ar[r]\ar[d]& F_{*}\sO_Y(-E)\ar[r]\ar[d]& B\Omega^1_Y(\log E)(-E) \ar[r]\ar[d]& 0\\
           0\ar[r]& \sO_Y \ar[r]\ar[d] & F_*\sO_Y\ar[r]\ar[d]& B\Omega^1_Y \ar[r]\ar[d]& 0\\
           0\ar[r]& \sO_E\ar[r]\ar[d] & F_{*}\sO_E\ar[r]\ar[d] &  \mathcal{B}_E \ar[r]\ar[d]& 0\\
           &0& 0& 0& }
\end{equation*}
we obtain an isomorphism
\[
\mathcal{B}_E\cong B\Omega^1_Y/B\Omega^1_Y(\log E)(-E). 
\]
We refer to \eqref{eq:OOB} for the horizontal exact sequence.
Since both $B\Omega^1_Y(\log E)(-E)$ and $B\Omega^1_Y$ are locally free $\sO_Y$-module, they satisfy $(S_2)$.
By depth lemma \cite[\href{https://stacks.math.columbia.edu/tag/00LX}{Tag 00LX}]{stacks-project}, 
the cokernel $\mathcal{B}_E$ satisfies $(S_1)$.
Moreover, $\mathcal{B}_{E, \eta_i}$ has rank $p-1$ at the generic point $\eta_i$ of every irreducible component $E_i$. Hence we have $\Supp \mathcal{B}_E=E$.

Next, consider the following commutative diagram  with exact rows and columns:
    \begin{equation*}
\xymatrix{ 0\ar[r]&\sO_E\ar[r]\ar[d]& F_{*}\sO_E\ar[r]\ar[d]& \mathcal{B}_E \ar[r]\ar[d]& 0\\
           0\ar[r]& \bigoplus_i \sO_{E_i}\ar[r]\ar[d] & \bigoplus_i F_*\sO_{E_i}\ar[r]\ar[d]& \bigoplus_i B\Omega^1_{E_i} \ar[r]& 0\\
           & \mathcal{C}\ar[r]^{\cong} & F_{*}\mathcal{C},& & }
\end{equation*}
where $\mathcal{C}\coloneqq \Coker (\sO_E\to \bigoplus_i \sO_{E_i})$.
Since the right vertical map is generically an isomorphism and $\mathcal{B}_E$ satisfies $(S_1)$, it is injective on $E$ by Lemma \ref{lem:S_1-sheaf}.
Since $\dim\Supp(\mathcal{C})=0$, the sheaf $\mathcal{C}$ is a finite-dimensional vector space over $k$.
By the snake lemma, the
map $\mathcal{C}\to F_*\mathcal{C}$ is injective, hence an isomorphism.
By the snake lemma again, the right vertical map
\[
\mathcal{B}_E\cong \bigoplus_i B\Omega^1_{E_i}
\]
is an isomorphism.
Therefore, it suffices to show that
\[
H^j(E_i,B\Omega^1_{E_i}\otimes \Omega^1_Y(\log E))=0
\]
for all $i,j$.
By Lemma \ref{lem:trivial decomp}, we have
\[
H^j(E_i,B\Omega^1_{E_i}\otimes \Omega^1_Y(\log E))=H^j(E_i,B\Omega^1_{E_i})^{\oplus 2}=H^j(\PP^1,B\Omega^1_{\PP^1})^{\oplus 2}=0,
\]
and we conclude.
\end{proof}

\begin{thm}
    The pair $(Y,E)$ is $F$-liftable.
    In particular, $X$ is $F$-liftable.
\end{thm}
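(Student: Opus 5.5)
By Theorem~\ref{thm:F-lift characterization} it suffices to show that the Cartier sequence \eqref{eq:BZOmega} splits. Its extension class lies in
\[
\mathrm{Ext}^1_Y(\Omega^1_Y(\log E), B\Omega^1_Y(\log E)) = H^1(Y, \Omega^1_Y(\log E)^*\otimes B\Omega^1_Y),
\]
where we use \eqref{eq:B=Blog} and the local freeness of $\Omega^1_Y(\log E)$. By the Convention, $\Omega^1_Y(\log E)^*\cong\Omega^1_Y(\log E)$. Write $\Omega\coloneqq \Omega^1_Y(\log E)$. The plan is to prove the stronger vanishing $H^1(Y,\Omega\otimes B\Omega^1_Y)=0$; since we work on a germ, $X$ may be taken affine.

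Tensor the sequence $0\to\sO_Y\to F_*\sO_Y\to B\Omega^1_Y\to 0$ with the locally free sheaf $\Omega$. This gives
\[
H^1(\Omega)\xrightarrow{F^*} H^1(\Omega\otimes F_*\sO_Y)\to H^1(\Omega\otimes B\Omega^1_Y)\to H^2(\Omega).
\]
Since $X$ is affine and the fibres of $\pi$ have dimension at most one, $H^2$ of any coherent sheaf on $Y$ vanishes, so $H^2(\Omega)=0$. It therefore suffices to show that the Frobenius pullback $F^*\colon H^1(\Omega)\to H^1(\Omega\otimes F_*\sO_Y)$ is surjective.

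To prove this surjectivity, compare with $E$ through the commutative square
\[
\begin{CD}
H^1(Y,\Omega) @>{F^*}>> H^1(Y,\Omega\otimes F_*\sO_Y)\\
@VVV @VVV\\
H^1(E,\Omega\otimes\sO_E) @>{F^*}>> H^1(E,\Omega\otimes F_*\sO_E),
\end{CD}
\]
whose vertical maps are induced by restriction to $E$.
\begin{itemize}
\item The left vertical map is surjective, because its cokernel injects into $H^2(\Omega(-E))=0$.
\item For the right vertical map, apply the sequence $0\to F_*\sO_Y(-E)\to F_*\sO_Y\to F_*\sO_E\to 0$ tensored with $\Omega$. Lemma~\ref{lem:vanishing} gives $H^1(\Omega\otimes F_*\sO_Y(-E))=0$, so the map is injective; the vanishing of $H^2$ makes it surjective as well. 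Hence it is an isomorphism.
\item The bottom map is an isomorphism by Proposition~\ref{prop:isom}.
\end{itemize}
Chasing the square, the top map $F^*$ is surjective. Therefore $H^1(\Omega\otimes B\Omega^1_Y)=0$, the sequence \eqref{eq:BZOmega} splits, and $(Y,E)$ is $F$-liftable.

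For the ``in particular'', restrict to $Y\setminus E\cong X\setminus\{x\}$. There a splitting of the Cartier operator, equivalently an $F$-lifting, is obtained from the one on $(Y,E)$. Since $X$ is normal, this corresponds to a splitting of the reflexive first Cartier operator on $X$, because reflexive sheaves are determined off the point $x$. By the characterization of $F$-liftability of normal singularities via the reflexive Cartier operator \cite[Section 3.3]{Kawakami-Takamatsu}, $X$ is $F$-liftable.

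The only delicate points are the $H^2$-vanishing on the resolution of an affine germ and the compatibility of the square above. The substantive inputs, namely the vanishing on $-E$ and the isomorphism on $E$, are exactly Lemma~\ref{lem:vanishing} and Proposition~\ref{prop:isom}.
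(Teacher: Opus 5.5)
Your proposal is correct and follows the paper's proof essentially step for step. It makes the same reduction to $H^1(Y,\Omega^1_Y(\log E)\otimes B\Omega^1_Y)=0$ and the same surjectivity criterion for $F^*$ on $H^1$, and it uses the same square built from Lemma~\ref{lem:vanishing}, Proposition~\ref{prop:isom} and the $H^2$-vanishings. The surjectivity of the right vertical map that you note is true but not needed for the chase. For the ``in particular'' you restrict to $Y\setminus E$ and extend the Cartier splitting reflexively, which unpacks by hand what the paper obtains by citing birational descent \cite[Proposition 3.9]{Kawakami-Takamatsu}.
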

\begin{proof}
    By birational descent of $F$-liftability \cite[Proposition 3.9]{Kawakami-Takamatsu}, the second assertion follows from the first one.
    By Theorem \ref{thm:F-lift characterization} and \eqref{eq:B=Blog}, the pair
    $(Y,E)$ is $F$-liftable if and only if
    the short exact sequence
    \[
     0\to B\Omega^1_Y \to  Z\Omega^1_Y(\log E) \to  \Omega^1_Y(\log E) \to 0
    \]
    splits.
    Thus, it suffices to show that
    \[
    H^1(Y, \Omega^1_Y(\log E)\otimes B\Omega^1_Y)=\mathrm{Ext}^1(\Omega^1_Y(\log E), B\Omega^1_Y)
    =0,
    \]
    where we used $\Omega^1_Y(\log E)^*=\Omega^1_Y(\log E)$ for the first equality.
    Considering the short exact sequence
    \[
    0\to \Omega^1_Y(\log E)\to \Omega^1_Y(\log E)\otimes F_{*}\sO_Y \to \Omega^1_Y(\log E)\otimes B\Omega^1_Y \to 0.
    \]
    Since the dimension of all $\pi$-fibers is one, we have $H^2(\Omega^1_Y(\log E))=0$.
    Thus the desired vanishing can be reduced to showing
    the surjectivity of
    \begin{equation}\label{eq1}
        H^1(Y, \Omega^1_Y(\log E)) \to H^1(Y, \Omega^1_Y(\log E)\otimes F_{*}\sO_Y)
    \end{equation}
    Consider the following commutative diagram with exact rows and columns:
    \begin{equation*}
\xymatrix{
& H^1(Y, \Omega^1_Y(\log E)\otimes F_{*}\sO_Y(-E))\overset{\text{Lem}\,\ref{lem:vanishing}}{=}0\ar[d]\\
H^1(Y, \Omega^1_Y(\log E))\ar[r]^-{\eqref{eq1}}\ar[d]& H^1(Y, \Omega^1_Y(\log E)\otimes F_{*}\sO_Y)\ar[d]\\
            H^1(E, \Omega^1_Y(\log E)\otimes \sO_E)\ar[r]^-{\cong,\,\text{Prop}\,\ref{prop:isom}}\ar[d] & H^1(E,\Omega^1_Y(\log E)\otimes F_{*}\sO_E)\\
            H^2(Y, \Omega^1_Y(\log E)(-E))=0.&}
\end{equation*}
Here, we obtain $H^2(\Omega^1_Y(\log E)(-E))=0$ again by dimensional reason.
By diagram chasing, we conclude the surjectivity of \eqref{eq1}, and we conclude.
\end{proof}

{
\noindent\textbf{Declaration of generative AI.}
During the development of this work, the authors used ChatGPT, powered by GPT-5.6 Sol Pro. 
The outline of the proof of Theorem \ref{thm:main}
%and in particular the key idea and an initial version of the argument in Proposition 3.6, 
was suggested by the model.  
The authors subsequently reconstructed and revised the complete argument. The authors take full responsibility for all mathematical claims, proofs, and text in this note.
}

\section*{Acknowledgements}
The authors wish to express their gratitude to Fabio Bernasconi for valuable conversations.
Kawakami was supported by JSPS KAKENHI Grant number JP24K16897 and by Inamori Foundation.
Takamatsu was supported by JSPS KAKENHI Grant number JP25K17228.

\bibliography{hoge.bib}
\bibliographystyle{alpha}

\end{document}